\newtheorem{thm}{Theorem}[section]
\newtheorem{prop}[thm]{Proposition}
\newtheorem{lem}[thm]{Lemma}
\newtheorem{cor}[thm]{Corollary}
\theoremstyle{definition}
\newtheorem{defn}[thm]{Definition}
\newtheorem{remark}[thm]{Remark}
\newtheorem{example}[thm]{Example}
\newtheorem{question}[thm]{Question}
\newcommand\Alt{\operatorname{Alt}}
\def\P{{\cal P}}
\newcommand{\wwr}{\wr\!\wr} 
\newcommand\V{\mathcal{V}}
\def\N{{\mathbb N}}
\def\Z{{\mathbf Z}}
\def\C{{\mathbf C}}
\begin{document}

\title{Constructions of torsion-free countable, amenable, weakly mixing groups}

\author{
\\ Rostislav Grigorchuk 
\footnote{email: grigorch@math.tamu.edu, NSF grant DMS-1207699, ERC AG COMPASP and Swiss National Science Foundation} 
\\ Texas A\&M University
\\ $\ $ 
\\ Rostyslav Kravchenko 
\footnote{email: rkchenko@gmail.com} 
\\ Northwestern University
\\ $\ $ 
\\ Alexander Olshanskii 
\footnote{The third author is supported by
NSF grant DMS-1161294 and Russian RFFR grant  5-01-0582.
email: alexander.olshanskiy@vanderbilt.edu }
\\ Vanderbilt University and Moscow State University}

\date{}
% Revisited by PH -> January 27, 2015.

\maketitle

\begin{abstract}
In this note, we construct countable, torsion-free, amenable, weakly mixing groups,
which answer a question of V. Bergelson. Some results related to
verbal subgroups and crystallographic groups are also presented.
\end{abstract}

\noindent
{\bf Keywords}: weakly mixing group, WM group, minimally almost periodic group, variety of groups, verbal subgroup, torsion-free group, wreath product, orderable group.
\\
{\bf MSC}: 20E99, 20C99, 37A15

\section{Introduction}
% section 1

Weak mixing of a group action on a measure space is a property stronger than ergodicity.
It plays an important role in the modern theory of dynamical systems 
(see for instance \cite{Gl03}, \cite{BG04}, and the references there). 
For actions of cyclic groups, it was introduced by Koopman and von Neumann in \cite{KvN32}. 
Later, von Neumann introduced 
the class of so-called ``minimally almost periodic groups''
(\cite{vN34}, see also \cite{vNW40}), 
which can be characterized by the property that 
every ergodic measure-preserving action of such a group on a finite measure space 
is in fact weakly mixing. 
At present, it is customary to call such groups \emph{weakly mixing groups}, 
or \emph{WM groups} for short. 
At the beginning of the development of the subject,
locally compact groups were involved;
but abstract groups play an important role in recent investigations,
and we restrict the discussion to them in the present article.
The case of amenable groups attracted special attention 
in the paper of Bergelson and Furstenberg \cite{BF}, 
establishing a relation between the WM property and Ramsey theory 
(see also the recent \cite{BCRZ14}). 
\par

For finitely generated groups, 
property WM is the same as having no nontrivial finite quotients
(see \ref{1FROMprop:char} and \ref{2FROMprop:char}
in Proposition \ref{prop:char} below). 
For amenable groups (finitely generated or not),
property WM is equivalent to having no nontrivial finite quotients or abelian quotients 
(\ref{3FROMprop:char} in Proposition \ref{prop:char}).
Thus locally finite simple groups, 
such as the group $\Alt_{\textrm{fin}}(\N)$ of finitary even permutations of $\N$, are WM. 
These groups are torsion groups.
\par

A few years ago V.\ Bergelson, in a private discussion with the first author, 
raised the following question:
 
\begin{question}
% 1.1 Q1
\label{q1}
Does there exist an infinite, \emph{torsion-free, amenable,} WM group?
\end{question}

We give a positive answer to this question, 
providing examples satisfying some additional conditions.
\par

This is done in two ways.
First, we follow ideas of 
B. H. Neumann and H.\ Neumann \cite{Ne49, NN59}, 
later developed by P.\ Hall \cite{Ha74} and other researchers.
This leads, see Corollary \ref{coro3.2},
to an example of a countable WM group
which is orderable (and hence torsion-free)
and locally solvable (and hence amenable). 
Additional tools allow us to construct simple groups that answer Question \ref{q1}.
\par

As an alternative, we use groups of type $F'/N'$,
where $F$ is a free group, $N$ a normal subgroup of $F$,
and $N'$ the commutator subgroup of $N$.
Groups of type $F/N'$, and more generally of type $F/\V(N)$ 
where $\V(N)$ is some verbal subgroup of $N$, 
and their subgroups,
were studied intensively in the '60s of the last century 
by many researchers (from \cite{M39} to \cite{Sh65} and much more) 
mostly with the purpose of studying varieties of groups (see \cite{N67}). 
They also play a role in the study of orderable groups,
as can be seen from \cite{KK74} and the literature cited there.
We show that groups of type $F'/N'$ lead to examples of WM groups 
under the condition that $F/N$ is an amenable WM group.
\par

The principal difference between these two constructions is the following.
The first one is an embedding construction,
that is flexible enough to embed groups 
with any combination of the properties in the list below
into groups with the same properties and extra ones.
\begin{equation}
\tag{$\mathcal C$}
\label{mathcalC}
\aligned
&\text{Be torsion-free},
\\
&\text{be locally indicable},
% \\ &\text{be solvable}, CERTAINLY NOT !!!!
\\
&\text{be amenable,}
\\
&\text{be elementary amenable,}
\\
&\text{be subexponentially amenable,}
\\
&\text{be right orderable,}
\\
&\text{be orderable.}
\endaligned
\end{equation}
In contrast, subgroups of groups given by the second construction are rather special:
they can be regarded as generalizations of torsion-free crystallographic groups 
(see Proposition \ref{cr}).
In particular, every non-free subgroup $H$ of the group $F/N'$ 
has a nontrivial free abelian normal subgroup;
moreover, $H$ must have non-trivial intersection with $N/N'$ (see Proposition \ref{nf}).
\par

In the study of amenable groups, 
an important role is assigned to the splitting of the class $AG$ of amenable groups 
into the disjoint union of the class $EG$ of elementary amenable groups
and the class $AG \smallsetminus EG$ of non-elementary amenable groups. 
A further splitting involves the class $SG$ of subexponentially amenable groups,
so that $AG$ splits into three classes: $EG$, $SG \smallsetminus EG$ and $AG \smallsetminus SG$. 
We provide examples of groups answering Question \ref{q1} 
that belong to these classes. 
A group property stronger than to be torsion-free is the property to be orderable. 
We provide examples with various orderability properties. 
Unfortunately all our examples are infinitely generated, 
and it would be interesting to answer Bergelson's question 
within the class of finitely generated groups.
Such examples would not be right orderable 
since a nontrivial finitely generated right-orderable amenable groups 
can be mapped onto $\mathbb Z$ \cite{Mo06}.
An interesting open question related to the above discussion is:

\begin{question}
% 1.2 Q2
\label{q2}
 Does there exist a finitely generated torsion-free, amenable, simple group?
\end{question}

Our note contains also some results concerning verbal subgroups 
(this is related to the second construction of WM groups),
and a construction of crystallographic groups, 
which is also based on the use of groups of type $F'/N'$.

\vskip.2cm

{\bf Acknowledgements}: 
The authors are grateful to L.\ Bowen and P.\ de la Harpe 
for their interest in this work and valuable remarks and suggestions. 
The first and second authors acknowledge the support of Institute Henri Poincar\'e in Paris, 
as a large part of the work on this note was done 
during the trimester program ``Random Walks and Asymptotic Geometry of Groups''.

\section{Preliminaries}
% section 2
\label{sectionprel}
Since our note lies between group theory and ergodic theory, 
we provide more details and give more definitions than would be required for a paper in one field.
\par

Let $G$ be a group. \emph{Assume first that $G$ is countable} (but see Definition \ref{defWM} below).
Recall that  a measure-preserving measurable action $\alpha$ of $G$ 
on a probability measure space $(X, \mathcal B, \mu)$ is
\begin{enumerate}[noitemsep, label=(\alph*)]
\item[(a)]
\emph{ergodic} if every $G$-invariant measurable subset of $X$ has measure either $0$ or $1$,
\item[(b)]
\emph{weakly mixing} if, for every ergodic
measure-preserving measurable action of $G$ on a probability measure space $(Y, \mathcal C, \nu)$,
the product action of $G$ on $X \times Y$ is again ergodic.
\end{enumerate}
Characterizations in terms of the associated unitary representation $\pi$ of $G$ on
the Hilbert space
$\{ f \in L^2(X, \mathcal B, \mu) \mid \int_X f d\mu = 0 \}$ are standard
(see for example \cite{Sc84}):
\begin{enumerate}[noitemsep, label=(\alph*)]
\item[(a')]
$\alpha$ is ergodic if and only if $\pi$ does not have any non-zero $G$-invariant function,
\item[(b')]
$\alpha$ is weakly mixing if and only if $\pi$ does not have any non-trivial finite dimensional 
subrepresentation.
\end{enumerate}

A countable group $G$ is called \emph{WM}, or \emph{weakly mixing}, 
or \emph{minimally almost periodic},
if one of the following equivalent conditions holds
(i.e.\ if they all hold):
\begin{enumerate}[noitemsep, label=(\roman*)]
\item\label{iFROMPreliminaries}
$G$ has no non-trivial finite-dimensional unitary representations.
\item\label{iiFROMPreliminaries}
$G$ does not admit non-constant almost periodic functions.
\item\label{iiiFROMPreliminaries}
Every ergodic measure preserving action of $G$ on a probability measure space 
is in fact weakly mixing.
\end{enumerate}
Equivalences
\ref{iFROMPreliminaries} $\Leftrightarrow$ \ref{iiFROMPreliminaries}
and
\ref{iiFROMPreliminaries} $\Leftrightarrow$ \ref{iiiFROMPreliminaries}
are proven in \cite{vN34} and \cite{Sc84} respectively.
%(von Neumann 34), Schmidt 84, Th.3.4, BerggelsonRosnblatt 80, Th. 3.2
\par

For example, an infinite cyclic group $\Z$ is not WM. 
Indeed, the action of  $\Z$  on the circle  $\{ z \in \C \mid \vert z \vert = 1 \}$
for which the generator  $1 \in \Z$  acts by a rotation
$z \longmapsto e^{2 \pi i \theta} z$  with  $\theta$ irrational
is ergodic and not weakly mixing.
Note that the group with one element is WM;
other examples of WM groups appear below.
\par

On the one hand, it is necessary to assume that $G$ is countable
for the proofs we know of some of the equivalences stated above;
this is quite explicit in \cite{Sc84}, where groups are assumed to be
locally compact \emph{and second countable}.
On the other hand, for the following definition and for what follows in this article,
the countability assumption is irrelevant.

\begin{defn}
% 2.1
\label{defWM}
A group is weakly mixing, or shortly WM, if it has no non-trivial finite-dimensional unitary representations.
\end{defn}

First we provide an alternative characterization of WM groups 
in the presence of amenability or finite generation.

\begin{prop}
% 2.2
\label{prop:char}
Let $G$ be a group.
\begin{enumerate}[label=(\arabic*)]
\item\label{1FROMprop:char}
If $G$ is WM then $G$ has no non-trivial finite or abelian quotients.
\item\label{2FROMprop:char}
If $G$ is finitely generated, 
then $G$ is WM if and only if it does not have non-trivial finite quotients.
\item\label{3FROMprop:char}
If $G$ does not have non-cyclic free subgroups 
(in particular if $G$ is amenable), 
then $G$ is WM if and only if it does not have non-trivial finite or abelian quotients.
\end{enumerate}
\end{prop}

\begin{proof}
\ref{1FROMprop:char}
We check the contraposition.
If $G$ is a group which has a non-trivial finite or abelian quotient $p : G \twoheadrightarrow Q$,
then $Q$ has a non-trivial finite-dimensional unitary representation $\rho$,
and thus $G$ has the non-trivial finite dimensional unitary pulled back representation $\rho\circ p$.
Hence $G$ is not WM.
\par

\ref{2FROMprop:char}
It suffices again to show the contraposition:
\emph{if $G$ is finitely generated and not WM, then $G$ has a non-trivial finite quotient.}
\par
By hypothesis, there exists a non-trivial unitary representation $\pi : G \to \operatorname{U}(n)$
for some $n \ge 1$.
If $G$ is finitely generated, so is $\pi(G)$. 
Mal'cev proved \cite{Ma40} that all such groups are residually finite. 
In particular, $\pi(G)$ has a non-trivial finite quotient, 
and therefore $G$ also has a non-trivial finite quotient.
\par

\ref{3FROMprop:char}
It suffices to show:
\emph{If $G$ has no non-cyclic free subgroups and is not WM, 
then $G$ has a non-trivial finite quotient or a non-trivial abelian quotient.}
\par

By hypothesis, there exists a non-trivial unitary representation $\pi : G \to \operatorname{U}(n)$
for some $n \ge 1$.
Observe that $\pi(G)$ is non-trivial and has no non-cyclic free subgroups.
By the Tits alternative \cite{Ti72}, $\pi(G)$ is virtually solvable.
We distinguish now two cases:
if $\pi(G)$ has a proper subgroup of finite index, so has $G$ (by pulling back),
and $G$ has a non-trivial finite quotient;
otherwise $\pi(G)$ is solvable and non-trivial, hence $\pi(G)$ has a non-trivial abelian quotient,
and so has $G$.
\end{proof}

The following two corollaries are straightforward consequences of the proposition.

\begin{cor} 
% 2.3
\label{cor:Higman}
Let $H$ be a finitely generated group without finite quotients,
for example the finitely presented 
Higman's group with $4$ generators and $4$ relations constructed in \cite{Hi51}.
For every proper normal subgroup $N$ of $H$, the quotient $H/N$ is a WM group.
\par
In particular, $H$ is a WM group, and, for every maximal normal subgroup $N$ of $H$,
the quotient $H/N$ is a simple WM group.
\end{cor}

Every non-elementary hyperbolic group $G$ has
a non-trivial finitely presented quotient $H$, itself without non-trivial finite quotients \cite{Ol00},
and such a quotient is WM by Corollary \ref{cor:Higman}.
Similarly, there are $2^{\aleph_0}$ non-isomorphic "monsters'' \cite[Theorem 28.7]{Ol89},
and they are WM groups.
A monster is here a non-abelian infinite group in which every proper subgroup is cyclic; 
these groups are $2$-generated and simple.

Recall that a group is called \emph{locally finite} 
if all its finitely generated subgroups are finite;
\emph{locally solvable} groups are defined similarly.
Such groups are amenable, indeed elementary amenable (see the definition below).

\begin{cor}
% 2.4
\label{cor:locally-finite}
Infinite simple locally finite groups are WM.
\end{cor}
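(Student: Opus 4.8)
The plan is to deduce this immediately from part 3 of Proposition~\ref{prop:char}, which says that for a non-trivial group with no non-cyclic free subgroups, being WM is equivalent to having no non-trivial finite or abelian quotients. So I would simply verify the three hypotheses of that statement for an infinite simple locally finite group $G$: non-triviality, absence of non-cyclic free subgroups, and absence of non-trivial finite and abelian quotients.

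Non-triviality is immediate since $G$ is infinite. For the absence of non-cyclic free subgroups I would invoke the fact already recalled that locally finite groups are amenable (indeed elementary amenable), and amenable groups have no non-cyclic free subgroups; alternatively one argues directly that a free group of rank $\ge 2$ is finitely generated and infinite, hence cannot sit inside the locally finite group $G$. For the quotients, simplicity of $G$ means its only quotients are $\{1\}$ and $G$ itself. A non-trivial finite quotient would then have to be $G$, contradicting that $G$ is infinite; and a non-trivial abelian quotient would force $G$ to be abelian, contradicting that an infinite simple group is non-abelian (the only abelian simple groups being the $\Z/p\Z$). Applying Proposition~\ref{prop:char}(3) now gives that $G$ is WM.

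I do not expect a genuine obstacle here — the statement is a direct corollary of Proposition~\ref{prop:char}. The only point that takes a moment's thought is ruling out abelian quotients from simplicity together with infiniteness, which rests on the classification of abelian simple groups as the cyclic groups of prime order.
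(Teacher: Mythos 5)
Your proposal is correct and is exactly the argument the paper intends: the corollary is stated without proof precisely because it follows immediately from Proposition~\ref{prop:char}(3), using that locally finite groups are amenable (hence have no non-cyclic free subgroups) and that an infinite simple group is non-abelian and has no proper non-trivial quotients. All three hypotheses are verified correctly, so there is nothing to add.
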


It is known that there are uncountably many pairwise non-isomorphic examples
of countable infinite simple locally finite groups.
The simplest example, $\Alt_{\textrm{fin}}(\N)$, has been cited in the introduction.
Other examples are provided by the projective special linear groups $\operatorname{PSL}_n(K)$,
where $n \ge 2$ is an integer and $K$ a locally finite field;
on the one hand, 
there are uncountably many pairwise non-isomorphic locally finite fields
(see \cite{BS89}, in particular Theorem 2.4 and Corollary 2.9);
on the other hand, 
for different $n$ or $K$,  the groups $\operatorname{PSL}_n(K)$ are pairwise non-isomorphic
(see \cite[Satz 2]{SW28}, or $\S$~IV.9 in \cite{Di71}).
For one more class of examples, we refer to
 \cite[Corollary 6.12]{KW73}.
\par

These groups are amenable torsion groups and are not finitely generated;
compare with Question \ref{q2}. 
The following question has been recently answered affirmatively by V. Nekrashevych in \cite{N16}:

\begin{question} 
% 2.5 Q3
\label{q3}
Does there exist an infinite, finitely generated, torsion, amenable, simple group?
\end{question}

A remarkable class of infinite finitely generated amenable simple groups 
has recently been discovered by K.\ Juschenko and N.\ Monod \cite{JM13}.
They proved that topological full groups $[[T]]$ 
associated with minimal homeomorphisms $T$ of Cantor sets are amenable,
confirming in such a way a conjecture raised by K.\ Medynets and the first author.
The commutator subgroup of such a group is simple and finitely generated 
when the homeomorphism is a subshift over finite alphabet \cite{Ma06}. Observe however that these groups are neither torsion nor torsion-free.

\vskip.2cm

Recall that a group $G$ is \emph{amenable} if it has invariant mean,
equivalently if it has a left invariant finitely additive probability measure $\mu$
defined on the algebra of all subsets of $G$,
normalized by the condition $\mu(G)=1$. 
The class $AG$ of amenable groups contains
finite groups, abelian groups, and groups of subexponential growth;
it is closed under the following four operations:
(i) taking subgroups, (ii) taking quotients, (iii) extensions, (iv) direct limits 
(the latter operation can be replaced by directed unions).
The class $EG$ of \emph{elementary amenable groups} 
is the smallest class of groups containing finite and abelian groups, 
and closed under the operations (i) to (iv);
it was introduced by M. Day in \cite{Da57}.
The class $SG$ of \emph{subexponentially amenable groups} 
is the smallest class of groups containing finitely generated groups of subexponential growth 
and closed under the operations (i) to (iv); it was introduced in \cite{Gr98}. 
The obvious inclusions $EG \subset SG \subset AG$ are proper \cite{Gr98,BV05}. 
We will say that an amenable group has the 
\emph{type of amenability} $\mathcal{T}_1, \mathcal{T}_2$ or $\mathcal{T}_3$ 
if it is in the class $EG$, $SG \smallsetminus EG$, or $AG \smallsetminus SG$, respectively.
A classical reference about amenable groups is \cite{{Gre69}};
more recent sources of information include the survey \cite{CGH99} and the monograph \cite{CC10}.
% \footnote{Is this not a rather strange list of references for this subject ?}
\par

Recall that a group is \emph{orderable} 
if it has a linear order (also called a total order)
that is invariant with respect to both left and right multiplication. 
A group is \emph{right (left) orderable}
if it has a linear order invariant with respect to right (left) multiplication. 
To be orderable is a stronger condition than to be right orderable; 
the latter is equivalent to be left orderable, 
and is stronger than to be torsion-free. 
\par

In our first construction (Section \ref{section1stcons} below),
we deal with restricted and unrestricted wreath products of groups. 
It is known that a restricted wreath product of (right) orderable groups is (right) orderable;
an unrestricted wreath product of right orderable groups is right orderable
\cite[Theorem 7.3.2]{MR77},
but an unrestricted wreath product of orderable groups need not be orderable.
Nevertheless, there is a way to set a bi-invariant order 
on some special subgroups of unrestricted wreath products 
(see Part (d) of Lemma \ref{keylemma1stcons}).
The books \cite{KK74} and \cite{MR77}
are good sources of information about orderable groups.

\section{The first construction of torsion-free WM groups}
% section 3
\label{section1stcons}
Our first construction shows how to embed a group into a simple group 
in such a way that various properties are preserved,
first of all the properties of torsion-freeness and amenability.
This construction uses ideas from \cite{Ne49,NN59} and \cite{Ha74}, 
and some of our statements are simplified versions of statements
that can be found in these articles. 
We present proofs for the reader's convenience.
We begin with the simplest way of obtaining examples 
that answer Question \ref{q1} in the affirmative. 
The corresponding groups are elementary amenable, as they are locally solvable groups.
\par

For a group $G$, 
the \emph{commutator subgroup} is denoted by $G'$ or $[G,G]$.
The group $G$ is \emph{perfect} if $G' = G$.
Recall that the \emph{derived series} $(G^{(s)})_{s \ge 0}$ is defined inductively by
$G^{(0)} = G$ and $G^{(s+1)} = [G^{(s)},G^{(s)}]$ for $s \ge 1$.
A group $G$ is \emph{solvable} if $G^{(s)} = \{1\}$ for $s$ large enough,
and its \emph{solvable length} is then the smallest integer $s$ such that $G^{(s)} = \{1\}$.
The group $G$ is \emph{indicable} if it has an infinite cyclic quotient,
and \emph{locally indicable} if all its finitely generated nontrivial subgroups are indicable.
\par

The next theorem refers to the list (\ref{mathcalC}) of group properties
defined in the Introduction.

\begin{thm}
% 3.1
\label{thm:solv}
Let ($\mathcal D$) be any combination of the group properties of (\ref{mathcalC}).
Every countable group with Property ($\mathcal D$)
 embeds in a countable perfect group with Property ($\mathcal D$).
\end{thm}

\begin{cor} 
% 3.2
\label{coro3.2} 
There is an infinite, countable, orderable, locally solvable and perfect WM group.
\end{cor}

\begin{remark} 
% 3.3 R1
\label{rem1} 
Recall that orderable groups are locally indicable
(Corollary 2, Section 2.2 in \cite{KK74}).
Also right orderable amenable groups are locally indicable \cite{Mo06}. 
In fact, local indicability of the groups involved can be seen directly 
from the construction if we start with indicable group 
and proceed as in the proof of Lemma \ref{keylemma1stcons}.
\end{remark}

The next result is a strengthening of Theorem \ref{thm:solv}:

\begin{thm}
% 3.4
\label{thm:simple}
Let ($\mathcal D$) be as in Theorem \ref{thm:solv}.
Every countable group with Property ($\mathcal D$)
embeds in an infinite countable simple group with Property ($\mathcal D$).
\end{thm}

\begin{cor} 
% 3.5
\label{coro3.5} 
There exists an infinite, countable, orderable, amenable, simple WM group. 
\par

Moreover, such examples exist
in each of the three classes $EG$, $SG \smallsetminus EG$, and $AG \smallsetminus SG$,
as defined near the end of Section \ref{sectionprel}.
\end{cor}

The following
Lemma \ref{keylemma1stcons} is the key argument in proving Theorem \ref{thm:solv}.
It is also the starting point for the construction 
leading to the simple groups mentioned in Theorem \ref{thm:simple}.
We recall first the definitions of wreath products.

Let $A, B$ be two groups.
Their \emph{unrestricted wreath product $A \wwr B$}
is the semi-direct product defined by $A^B \rtimes B$, 
where $A^B$ is the group of maps from $B$ to $A$,
with pointwise multiplication,
and where $\rtimes$ refers to the action of $B$ on $A^B$ by shifts.
Their \emph{restricted wreath product}
is the subgroup $A \wr B := A^{(B)} \rtimes B$ of $A \wwr B$,
where $A^{(B)}$ is the group of maps from $B$ to $A$ with finite support.
\par

Recall that the restricted wreath product of two (right) orderable groups is (right) orderable \cite{Ne49}.
\par

For $a \in A$, we define $\delta_a \in A^{(B)}$ by $\delta_a(1) = a$ and $\delta_a(b) = 1$ for $b \ne 1$.
By the inclusion $a \mapsto \delta_a$,
we identify $A$ with a subgroup of $A \wr B$.
Also, we identify $B$ with a subgroup of $A \wr B$, in the natural way.
A fortiori $A$ and $B$ are also subgroups of $A \wwr B$.

\begin{lem}
% 3.6
\label{keylemma1stcons}
Let $G$ be a group and $H$ a subgroup 
of the unrestricted wreath product $G^{\Z} \rtimes \Z$.
\begin{enumerate}[label=(\alph*)]
\item\label{aFROMkeylemma1stcons}
If $G$ is torsion-free, then $H$ is torsion-free.
\item\label{bFROMkeylemma1stcons}
If $G$ is locally indicable, then $H$ is locally indicable.
\item\label{cFROMkeylemma1stcons}
If $G$ is solvable of derived length $s$, 
then $H$ is solvable of derived length $\le s+1$.
\item\label{dFROMkeylemma1stcons}
If $G$ is amenable, then $H$ is amenable.
\end{enumerate}
Suppose moreover that $G$ is countable.
There exists a countable subgroup $H$ of $G^{\Z} \rtimes \Z$
with the following properties.
\begin{enumerate}[label=(\alph*)]
\addtocounter{enumi}{4}
\item\label{eFROMkeylemma1stcons}
$G$ is a subgroup of $[H,H]$.
\item\label{fFROMkeylemma1stcons}
If $G$ is amenable, 
then $H$ is amenable of the same type of amenability as $G$.
\item\label{gFROMkeylemma1stcons}
If $G$ is (right) ordered,
then $H$ is also (right) orderable with an order extending the order on $G$.
\end{enumerate}
\end{lem}

\begin{proof}
Claims \ref{aFROMkeylemma1stcons} to \ref{dFROMkeylemma1stcons}
are straightforward;  their proofs are left to the reader.

\vskip.2cm

For $g \in G$,
define $f_g \in G^\Z$ by $f_g(n) = g$ if $n \leq 0$ and $f_g(n) = 1$ if $n>0$. 
Let $\sigma$ be the standard generator of the active group $\Z$; 
it acts on $G^\Z$ as the shift to the left, that is $\sigma(f)(n)=f(n+1)$ for all $n\in\Z$.
Observe that $[f_g,\sigma]=f_g\sigma f_g^{-1}\sigma^{-1}=f_g\sigma(f^{-1}_{g})=\delta_g$. 

Let now $H$ be the subgroup of $G^{\Z} \rtimes \Z$ 
generated by $\sigma$ and all $f_g$, $g\in G$.

\vskip.2cm

\ref{eFROMkeylemma1stcons}
By the observation just above, we have $G \leq [H,H]$.

\vskip.2cm

\ref{fFROMkeylemma1stcons}
Let $U$ be a subgroup of $H\cap G^\Z$
generated by a finite set $S$ of elements of the form $f_g^{\sigma^i}$, 
with $g \in G \smallsetminus \{1\}$ and $i \in \Z$. 
Since every $f_g^{\sigma^i} : \Z \to G$ has only two different values, 
the set $\Z$ is a disjoint union of finitely many subsets $Z_k =Z_k(U)$ 
such that every function $\Z \to G$ in $S$, and thus more generally in $U$,
is constant on each $Z_k$.
Therefore $U$ is embeddable into a product of finitely many copies of $G$,
and it follows that $U$ is in the same class, $EG$, $SG$ or $AG$, as $G$ is in.
\par

Since every finitely generated subgroup of $H \cap G^\Z$ is contained
in a subgroup of the kind of $U$ above, 
the countable group $H \cap G^\Z$ is an ascending union 
of finitely generated subgroups of the kind of $U$ above.
It follows that $H \cap G^\Z$ is in the same class, $EG$, $SG$ or $AG$, as $G$ is in.
This holds also for $H$,
because we have an extension $H \cap G^\Z \lhook\joinrel\rightarrow H \twoheadrightarrow q(H)$
in which the right-hand term is a subgroup of $\Z$.
(Here, $q : G^{\Z} \rtimes \Z \twoheadrightarrow \Z$ denotes the canonical projection.)
Since $G \le [H,H]$, we conclude that $H$ is in the same class,
$EG$, $SG \smallsetminus EG$ or $AG \smallsetminus SG$, as $G$ is in.

\vskip.2cm

\ref{gFROMkeylemma1stcons}
Assume that $G$ is right-ordered;
denote by $G_+ = \{ g \in G \mid g > 1 \}$ its cone of positive elements.
Elements of $G^{\Z} \rtimes \Z$ are written $(f,m)$, with $f \in G^\Z$ and $m \in \Z$.
For $f \in G^\Z$ with $f(i) = 1$ for $i$ large enough, 
set $i^{\max}_f = \max \{ i \in \Z \mid f(i) \ne 1 \}$; 
we write $i^{\max}_f = -\infty$ when $f(i) = 1 \in G$ for all $i \in \Z$.
Observe that $i^{\max}_f$ is well-defined for all $f$ in $S$, 
and therefore for all $f \in G^\Z$ with $(f,m) \in H$ for some $m \in \Z$.
Set
$$
H_+ \, = \, 
\{ (f,m) \in H \mid m > 0 \hskip.2cm \text{or} \hskip.2cm
m = 0 \hskip.2cm \text{and} \hskip.2cm f(i^{\max}_f) \in G_+ \} .
$$
It is easy to check that $H_+$ is a subsemigroup of $H$,
that $H_+ \cup H_+^{-1} = H \smallsetminus \{1\}$,
and that $H_+ \cap H_+ = \emptyset$.
It follows that $H_+$ is the cone of positive elements of a total right order on $H$,
defined by $h_1 > h_2$ if $h_1h_2^{-1} \in H_+$. 
This order extends the right order given on $G$. 
\par

Assume moreover that $G$ is ordered,
and more precisely that the order given on $G$ is two-sided, 
equivalently that $G_+$ is invariant by conjugation.
It is again easy to check that $H_+$ is invariant by conjugation,
i.e.\ that $H$ is an orderable group, with the order defined by $H_+$
extending the given order on $G$.
\end{proof}

\begin{proof}[Proof of Theorem \ref{thm:solv}]
Let $G_0$ be a group with Property ($\mathcal D$), 
for example $G_0 = \Z$, with the canonical order.
Define inductively a nested sequence 
$
G_0 < \cdots < G_i < G_{i+1} < \cdots ,
$
where $G_{i+1}$ is obtained from $G_i$ by the same construction as 
$H$ from $G$ in Lemma \ref{keylemma1stcons}.
Define $G$ to be the union $\bigcup_{i \ge 0} G_i$.
Then $G$ is perfect: for any $g \in G$, there exists $j \ge 0$ such that
$g \in G_j < [G_{j+1},G_{j+1}]$.
Since Property ($\mathcal D$) holds for every $G_i$ by Lemma \ref{keylemma1stcons},
it holds also for $G$.
\end{proof}

\begin{proof}[Proof of Corollary \ref{coro3.2}] 
Let $G_0$ be a countable indicable orderable soluble group, e.g.\ $G_0 = \Z$.
Let $(G_i)_{i \ge 0}$ and $G$ be as in the previous proof.
Then $G_i$ is solvable for all $i \ge 1$ 
by Lemma \ref{keylemma1stcons}\ref{cFROMkeylemma1stcons},
so that $G = \bigcup_{i \ge 0} G_i$ is locally solvable.
Moreover, $G$ is orderable, amenable and perfect, by Theorem \ref{thm:solv}.
\par

Since $G$ is perfect, it does not have any nontrivial abelian quotient.
Since $G$ is locally solvable, every finite quotient $K$ of $G$ is solvable;
as moreover $K' = K$, this implies $K = \{1\}$.
Hence $G$ is WM, by Proposition \ref{prop:char}\ref{3FROMprop:char}.
\end{proof}

For the proof of Theorem \ref{thm:simple}, it is convenient to have the following lemma.
\par

\begin{lem}
% 3.7
\label{wreathnormal}
Let $A,B$ be two groups, $G = A \wr B$ their restricted wreath product,
and $N$ a normal subgroup of $G$ containing a non-trivial element $b$ from $B$.
\par

Then $N$ contains $[A,A]$.
\end{lem}

\begin{proof}
Let $x,y\in A$. Then $x$ and $byb^{-1}$ commute, because $b \neq 1$.
Also, $y \equiv byb^{-1} \pmod{N}$. Thus $xy \equiv yx \pmod{N}$, 
or equivalently $[x,y] \in N$.
\end{proof}

\begin{proof}[Proof of Theorem \ref{thm:simple}]
\emph{First step: construction of a group $C$ containing a given group $A$.}
Let $A$ be a group.
For every integer $i \ge 0$, denote by $A_i$ an isomorphic copy
of the group obtained from $A$ as $H$ is obtained from $G$
by the construction of Lemma \ref{keylemma1stcons},
and let $\phi_i : A_i \overset{\approx}{\longrightarrow} A_{i+1}$
be an isomorphism.
Define inductively $W_i$ by $W_0 = A_0$ and $W_i = W_{i-1} \wr A_i$ for $i \ge 1$.
For every $i \ge 0$, identify $W_i$ with a subgroup of $W_{i+1}$
as indicated just before Lemma \ref{wreathnormal},
and define $W = \bigcup_{i=0}^\infty W_i$.
\par

Define inductively monomorphims $\psi_i$ as follows. 
Let $\psi_0: W_0 \hookrightarrow W_1$ extend the isomorphism $\phi_0$, 
by mapping $W_0=A_0$ 
onto the acting group $A_1$ of the wreath product $W_1 = A_0 \wr A_1$.
Assume by induction that 
the monomorphism $\psi_{i-1}: W_{i-1}\hookrightarrow W_i=W_{i-1}\wr A_i$
is already defined for $i \ge 1$. Then the monomorphism 
$\psi_{i}: W_{i}=W_{i-1}\wr A_i\hookrightarrow W_{i+1}=W_{i}\wr A_{i+1}$ 
is given by the pair of monomorphisms $\psi_{i-1}$ and $\phi_i$. 
(Here we identify as above the last $W_i$ with a subgroup of $W_{i+1}$ 
% as indicated just before Lemma \ref{wreathnormal}
and use the following property of wreath products:
if $X \le Y$ and $Z \le V$ are group pairs, 
then $X$ and Z generate a subgroup in $Y \wr V$ canonically isomorphic to $X \wr Z$.) 
Thus, the series of isomorphisms $\psi_i$ induces an injective endomorphism
$\phi$ on the union $W = \bigcup_{i=0}^{\infty} W_i$ with $\phi (A_i)=A_{i+1}$ for all $i \ge 0$.
\par

Define $C$ to be the HNN extension of $W$
with stable letter $t \in C$ such that $twt^{-1}=\phi(w)$ for every $w \in W$.
\par

Observe that, for any $a \in A_0$, $a \ne 1$, 
the normal closure $N$ of $a$ in $C$ contains $A$.
Indeed, $N$ contains $tat^{-1} \in A_1 \smallsetminus \{1\}$,
so that $N$ contains $A'_0$ by Lemma \ref{wreathnormal},
hence $N$ contains $A$ by Lemma \ref{keylemma1stcons}.
(Recall that $A$ is identified with a subgroup of $A_0$, 
and therefore also with a subgroup of $C$.)

\vskip.2cm

\emph{Second step: construction of a simple group $H$.}
Let us denote by $\theta$ the construction of the first step,
so that a group $A$ is a subgroup of the group $C=\theta(A)$. 
Iteration provides an ascending series $\theta(A) < \theta^2(A) = \theta(\theta(A)) < \cdots$.
Define $H = \bigcup_{i=0}^\infty \theta^i (A)$ to be the union of the groups in this series.
\par

Then $H$ is a simple group. 
Indeed, let $a \in H$, $a \ne 1$;
then $a \in \theta^i(A)$ for some $i$.
The normal closure $N$ of $a$ in $\theta^{i+1}(A)$ 
(and a fortiori in $H$) contains $\theta^i(A)$,
as in the last observation of the previous step.
Similarly, $N$ contains $\theta^j(A)$ for every $j \ge i$.
It follows that $N = H$.

\vskip.2cm

\emph{Third step: if $A$ has some property of (\ref{mathcalC}),
then $H$ has the same property.}
Let us assume that $A$ has some property ($\mathcal P$) 
of the list (\ref{mathcalC}).
For all $i \ge 0$, the group $A_i$ has ($\mathcal P$) by Lemma \ref{keylemma1stcons}.
We claim that so does $W$.
\par

Suppose first that ($\mathcal P$) is local indicability.
Then $W$ has ($\mathcal P$),
because this property is closed under
subgroups, Cartesian products, group extensions and direct unions. 
\par

Suppose now that ($\mathcal P$) is (right) orderability.
This property is stable by restricted (right) products;
see Proposition 4 in Section 1.1 of \cite{KK74},
or proceed as in the proof of Lemma \ref{keylemma1stcons}. 
Consequently, 
if $A_0$ is (right) orderable, say with some (right) order,
then $W_{i+1}$ is (right) orderable, with a order extending that of $W_i$, for all $i \ge 0$.
It follows that $W$ is (right) orderable.
\par

The group $C$ is a semidirect product of 
the group $\overline{W} := \bigcup_{i=0}^{\infty} t^{-i}Wt^i$
and the infinite cyclic group $\langle t\rangle$. 
Because of the properties of the endomorphism $\phi$,
and by induction on $i \ge 1$,
each of the groups $t^{-i}Wt^i$ has a (right) order 
extending the (right) order on its subgroup $t^{-i+1}Wt^{i-1}$. 
Hence the group $\overline{W}$ has a (right) order extending that on $A$. 
Finally the (right) order on $C$ extending this is given by the following rule: 
$t^m w>1$ for all $w \in \overline{W}$ when $m>0$,
and for $w > 1$ in $\overline{W}$ when $m=0$;
we leave it to the reader to check that this indeed defines a positive cone,
and that the resulting order is two-sided if the original order on $A$ is two-sided
(using that the endomorphism $\phi$ preserves the order).
\par

We have shown that, if $A$ is (right) orderable, with some (right) order,
then $\theta(A)$ has a (right) order extending that of $A$.
Similarly, this (right) order extends to $\theta^i(A)$ for all $i \ge 0$, and therfore to $H$.
\par

If ($\mathcal P$) is another property of the list (\ref{mathcalC}),
then it extends from $A_0$ to $H$ by standard arguments,
and the proof is complete.
\end{proof}

\begin{proof}[Proof of Corollary \ref{coro3.5}]
Let first $H$ be the group obtained as in Theorem \ref{thm:simple} from
the group with one element.
Then $H$ is elementary amenable and orderable; 
it is also infinite and simple, 
and therefore it is a WM group by Proposition \ref{prop:char}\ref{3FROMprop:char}.
\par

Let now $\mathcal G$ be any of the $3$-generated $2$-groups of intermediate 
(between polynomial and exponential) growth
constructed by the first author in \cite{Gr84}. 
It is well-known that groups of subexponential growth are amenable,
and the class $EG$ does not contain groups of intermediate growth,
i.e.\ of growth between polynomial and exponential \cite{Ch80};
hence $\mathcal G$ belongs to the class $SG \smallsetminus EG$.
We present it in the form $F/N$, where $F$ is a free group of rank 3. 
It is known that $\mathcal G$ is a residually finite $2$-group,
and therefore the intersection of all the derived subgroups $\mathcal G^{(i)}$ is trivial. 
Hence the group $A=F/N''$ is orderable (see Corollary 2 on Page 109 of \cite{KK74}). 
We have $A \in SG$, since the class $SG$ is closed under extensions, 
and $A\notin EG$, since the homomorphic image $\mathcal G$ is not in $EG$. 
\par

Let $H$ be the group obtained from $A$ as in Theorem \ref{thm:simple}.
Then $H \in SG$, by Theorem \ref{thm:simple},
and $H \notin EG$, since $A$ is a subgroup of $H$. 
Hence $H$ is the required example in $SG \smallsetminus EG$.
\par

Finally, let $\mathcal B$ be the Basilica group that was constructed in \cite{GZ02}. 
It is 2-generated, residually finite-$2$ group, 
amenable (and so, in particular, has exponential growth) \cite{BV05}, but not subexponentially amenable \cite{GZ02}. 
Therefore, if we replace $\mathcal G$ by $\mathcal B$ in the argument of the previous paragraph, 
we obtain the desired example $H \in AG \smallsetminus SG$.
\end{proof}

\begin{remark}  The torsion free group $\tilde{\mathcal{G}}$ of intermediate growth constructed in \cite{Gr85} (it covers $\mathcal{G}$ with abelian kernel) is right orderable, as was shown in in \cite{GM93}.
% 3.8 R2

Note that the Basilica group $\mathcal B$ is {\it right} orderable.
To explain this we use some facts from \cite{GZ02} and the terminology from \cite{BGS03}.
There are two natural embeddings of $\mathcal B'$ in itself, 
given by the geometry of the tree on which $\mathcal B$ acts.
We denote their images by $\mathcal B'_0$ and $\mathcal B'_1$, each isomorphic to $\mathcal B'$.
They are commuting subgroups with trivial intersection in $\mathcal B'$.
Proposition 2 and Lemma~7 from \cite{GZ02} show that 
$\mathcal B$ is weakly regular branch over its commutator subgroup $\mathcal B'$, 
and the relation $\mathcal B' = (\mathcal B_0' \times \mathcal B_1') \rtimes \langle c \rangle$ holds,
where $c$ is the commutator of the two standard generators of $\mathcal B$.
Hence $\mathcal B' / (\mathcal B_0'  \times \mathcal B_1')$ is infinite cyclic,
while $\mathcal B / \mathcal B' \simeq \Z^2$.
It follows that $\mathcal B$ contains a descending sequence $(H_n)_{n \ge 0}$ 
of normal subgroups with trivial intersection, with
$H_0 = \mathcal B$, 
$H_1 = \mathcal B'$, 
and $H_n$ isomorphic to the direct product of $2^{n-1}$ copies of $\mathcal B'$;
moreover, $H_n / H_{n+1} \simeq \Z^{2^{n-1}}$ for $n \ge 1$,
and $H_0 / H_1 \simeq \Z^2$ as already noted.
Since the quotients $H_n/H_{n+1}$ are torsion-free abelian,
$\mathcal B$ is right orderable by a 
result of Zaiceva (Proposition 1, Section 5.4 in \cite{KK74}).
\end{remark}

At present, it is not known whether or not the group $\mathcal B$ is orderable, but the torsion-free group of intermediate growth $\tilde{\mathcal{G}}$ is not as for one of its generators $a^2$ is a central element, but $a$ does not commute with other generators (this was pointed to us by A. Navas).

\begin{question} 
% 3.9 Q4
\label{q4}
Does there exist an orderable group of intermediate growth?
\end{question}

\section{The second approach to WM groups}
% section 4
\label{sectionconstF/N}

The following lemma is well known (see \cite{Hi55}) in case $G$ is a free group. 
The same proof works in the following version.

\begin{lem}
% 4.1
\label{lem:torsion}
Let $G$ be a group such that, for every subgroup $H \leq G$, 
the abelianization $H/H'$ is a torsion-free group. 
Then, for every normal subgroup $N \triangleleft G$, the quotient $G/N'$ is a torsion-free group.
\end{lem}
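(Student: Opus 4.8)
\noindent\textit{Proof proposal.} The plan is to argue by contradiction, invoking the hypothesis only for the subgroup $N$ itself and for one auxiliary subgroup $H=\langle N,x\rangle$, where $x$ is a would-be nontrivial torsion element of $G/N'$. So suppose $G/N'$ is not torsion-free and pick $x\in G$ with $x\notin N'$ but $x^m\in N'$ for some $m\ge1$.

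I would first dispose of the case $x\in N$: then $xN'$ is a nontrivial element of finite order in $N/N'$, contradicting the hypothesis applied to $N\le G$. Hence $x\notin N$, and the image $xN$ in $G/N$ has finite order $k$ dividing $m$ (because $x^m\in N'\subseteq N$), with $k\ge2$. The next step --- and, I expect, the most delicate one --- is to push a power of $x$ all the way into $N'$: since $x^k\in N$ and $(x^k)^{m/k}=x^m\in N'$, the coset $x^kN'$ is torsion in $N/N'$, so by the hypothesis for $N$ it is trivial, i.e.\ $x^k\in N'$. Now put $H=\langle N,x\rangle$. Its quotient $H/N=\langle xN\rangle$ is cyclic of order $k$, hence abelian, so $H'\subseteq N$; and $N\subseteq H$ gives $N'\subseteq H'$. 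Thus $x^k\in N'\subseteq H'$, so the order of $x$ in $H/H'$ divides $k$, while composing with the canonical surjection $H/H'\twoheadrightarrow H/N$ (which sends $x$ to $xN$, of order $k$) shows that this order is a multiple of $k$. Hence $x$ has order exactly $k\ge2$ in $H/H'$ --- nontrivial torsion --- contradicting the hypothesis for $H\le G$, and the lemma follows.

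The step requiring the most care is the passage to $x^k\in N'$: it is the only place where torsion-freeness of $N/N'$ is used in an essential way, and it depends on keeping straight the divisibility $k\mid m$ so that $x^m$ really is a power of $x^k$. Everything else is routine bookkeeping with the chain $N'\subseteq N\subseteq H\subseteq G$ together with the triviality ``cyclic $\Rightarrow$ abelian.'' For the record, a more structural alternative would be to analyze $1\to N/N'\to G/N'\to G/N\to1$ as an extension of groups with $N/N'$ regarded as a $\mathbb{Z}[G/N]$-module, reduce to the case $G/N$ finite cyclic, and check directly that $(H/N')^{\mathrm{ab}}$ then has $\mathbb{Z}/k$ as a direct factor; but the short argument above sidesteps this.
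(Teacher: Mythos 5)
Your proof is correct, and it is essentially the paper's argument: both reduce to the subgroup $H=\langle N,x\rangle$, use that $H/N$ is cyclic (hence abelian, so $H'\le N$), and then apply the torsion-freeness hypothesis exactly twice, to $N/N'$ and to $H/H'$. The paper packages these same ingredients as the observation that $1\to H'/N'\to H/N'\to H/H'\to 1$ exhibits $H/N'$ as an extension of a torsion-free group by a torsion-free group, whereas you run the equivalent element chase by contradiction (tracking the exact order $k$), but the underlying mechanism is identical.
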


\begin{proof}
Let $N \triangleleft G$ and $a \in G$; 
set $H = \langle a, N\rangle=\langle a\rangle N$.
It suffices to show that $H/N'$ is torsion-free. 
Consider the exact sequence
\[
1\longrightarrow H'/N' \longrightarrow H/N' \longrightarrow H/H' \longrightarrow 1.
\]
Since $H'\leq N$, we have $H'/N' \leq N/N'$, 
and it follows that $H'/N'$ is torsion-free. 
Hence $H/N'$ is an extension of a torsion-free group by a torsion-free group, 
so that $H/N'$ itself is torsion-free.
\end{proof}

We also need the following result, of independent interest.
To the reader not familiar with the notion of variety of groups,
we suggest, instead of an arbitrary variety, to think of the variety of abelian groups,
replacing in the statement and the proof the notation $\V(G)$, for a verbal subgroup,
by the notation $G'$, for a derived subgroup. Only this special case will be used later.
\par

Recall that, if we have a set of words in a countable group alphabet, 
the corresponding \emph{variety} is the class of all groups 
which have these words $w$ as left-hand sides of identical relations $w=1$ (or laws). 
A variety is \emph{proper} if it is not equal to the class of all groups. 
Let $\V$ be a variety and $G$ a group;
the \emph{verbal subgroup} $\V(G)$ is the subgroup of $G$ generated by 
all values of the words when their letters are replaced by elements of $G$. 
Note that $\V(G)$ is normal, indeed fully characteristic in $G$, 
and that $G/\V(G) \in \V$;
moreover, $G$ is in $\V$ if and only if $\V(G) = \{1\}$.
Let $K$ be a normal subgroup of $G$; then $\V(G/K) = \V(G)K/K$.
If $\V$ is a variety, there is a variety $\V^2$ defined by the equality
$\V^2(G) = \V(\V(G))$ for every group $G$.

\par

We prove the following theorem:

\begin{thm}
% 4.2
\label{thm:var}
Let $F$ be a non-cyclic free group and $N$ a normal subgroup of $F$. 
Let $\V$ be a proper variety of groups.
Then the group $\V(F)/\V(N)$ has a non-trivial quotient in $\V$ if and only if $F/N$ has.
\end{thm}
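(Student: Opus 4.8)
The plan is to prove both implications. The forward direction is essentially trivial: if $\V(F)/\V(N)$ surjects onto a nontrivial group in $\V$, then since $\V(F)/\V(N)$ is itself a quotient of... well, no — the cleaner route is the contrapositive for one direction and a direct construction for the other. Let me organize it as: (1) if $F/N \notin$ has no nontrivial $\V$-quotient, show $\V(F)/\V(N)$ has none; (2) conversely, if $F/N$ has a nontrivial quotient in $\V$, construct one for $\V(F)/\V(N)$.

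For direction (2), suppose $F/N$ surjects onto a nontrivial group $Q \in \V$. Composing with $F \twoheadrightarrow F/N$, we get $\phi: F \twoheadrightarrow Q$ with $N \leq \ker\phi$. Since $Q \in \V$, we have $\V(Q) = 1$, so $\V(F) \leq \ker\phi$ as well; thus $\V(F)$ and $N$ are both contained in $K := \ker\phi$. I want to produce a nontrivial $\V$-quotient of $\V(F)/\V(N)$. The natural candidate is to use the $F$-action: consider $K = \ker\phi$, a normal subgroup of $F$ containing both $N$ and $\V(F)$. Then $\V(N) \leq \V(K) \leq K$, and one considers $K/\V(K)$ or, better, the image of $\V(F)$ in $K/\V(N)$. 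The key point will be that $\V(F)/\V(N)$ is \emph{not} contained in $\V(K)/\V(N)$ unless something degenerates — here is where non-cyclicity of $F$ and properness of $\V$ must enter, presumably via the fact (for the abelian case: $F' \not\leq K'$ when $F/K$ is a nontrivial abelian group of rank... ) that for a free group $F$ of rank $\geq 2$ and any proper verbal subgroup, $\V(F) \not\subseteq \V(K)$ whenever $K \lhd F$ with $F/K \in \V$ nontrivial. Granting that, the quotient $\bigl(\V(F)/\V(N)\bigr) \big/ \bigl(\V(F) \cap \V(K)/\V(N)\bigr) \cong \V(F)/(\V(F)\cap\V(K))$ is a nontrivial subgroup of $K/\V(K) \in \V$, hence a nontrivial $\V$-quotient as desired — I'd need to check it is a \emph{quotient}, which requires $\V(F)\cap\V(K)$ to be normal in $\V(F)$ (it is, being an intersection of normals) but more delicately that this subgroup is the kernel of a homomorphism onto something in $\V$; taking $\V(F)/\V(N) \to K/\V(K)$ induced by inclusion $\V(F) \hookrightarrow K$ does the job since $\V(N) \leq \V(K)$.

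For direction (1), assume $F/N$ has no nontrivial $\V$-quotient; equivalently $\V(F) = \V(N) \cdot$ ... no: $F/N$ having no nontrivial quotient in $\V$ means $\V(F/N) = F/N$, i.e. $\V(F)N = F$. I then want $\V(F)/\V(N)$ to have no nontrivial $\V$-quotient, i.e. $\V(\V(F)/\V(N)) = \V(F)/\V(N)$, i.e. $\V(\V(F))\cdot\V(N) = \V(F)$. So the statement to prove reduces to the purely verbal-subgroup identity: \emph{if $\V(F)N = F$ then $\V(\V(F))\V(N) = \V(F)$}. The idea here is that from $F = \V(F)N$ one can write generators of $\V(F)$ in terms of generators of $\V(\V(F))$ and conjugates by $N$; more precisely, a generic value $w(f_1,\dots,f_k)$ of a word $w$ (for $w$ in the defining set of $\V$) on elements $f_i \in F$ can be rewritten, using $f_i = u_i n_i$ with $u_i \in \V(F)$, $n_i \in N$, modulo $\V(N)$, as a value on elements of $\V(F)$. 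This is a standard "verbal-subgroup rewriting" computation — the kind of manipulation in the Hall/Neumann circle of ideas cited earlier. The main obstacle, and the part requiring genuine care, is this rewriting step: making sure that after expanding $w(u_1n_1,\dots,u_kn_k)$ via commutator identities, every "error term" lands in $\V(N)$ (using normality of $N$ in $F$, so $u n u^{-1} \in N$ for $u \in \V(F) \leq F$) and the main term lands in $\V(\V(F))$. I expect to handle this by induction on word length, or by invoking the freeness of $F$ to reduce to checking the identity $\V(\V(F))\V(N) = \V(F)$ on the free group directly and then noting functoriality. The non-cyclicity hypothesis on $F$ is presumably not needed for direction (1) but is essential for direction (2) (for $F = \Z$, $\V(F)$ is often trivial or too small), so I would flag that asymmetry.
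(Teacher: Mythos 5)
Your overall architecture matches the paper's (contrapositive for one direction, a direct construction for the other), but both directions as written contain unfilled gaps. For the direction ``$F/N$ has a non-trivial $\V$-quotient $\Rightarrow$ so does $\V(F)/\V(N)$'', your construction is essentially the paper's: with $K=\ker\phi\supseteq N$ and $\V(F)\le K$, the inclusion induces $\V(F)/\V(N)\to K/\V(K)$ with image $\V(F)/\V(K)$, a subgroup of $K/\V(K)\in\V$. But the entire content of this direction is the non-triviality $\V(K)\neq\V(F)$, and you explicitly grant it (``Granting that\dots''). This is not a routine fact: it is P.~Neumann's Theorem~43.41 in \cite{N67}, which is precisely what the paper invokes here, and it is the only place where non-cyclicity of $F$, freeness, and properness of $\V$ enter. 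Even in the abelian case ($\V(F)=F'$), the statement $F'\neq K'$ for a proper normal subgroup $K$ of a non-cyclic free group is the Auslander--Lyndon theorem \cite{AL55}, not an elementary computation. Without a proof or a citation, the harder half of this implication is missing.

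For the other direction your reduction to the identity ``$\V(F)N=F$ implies $\V^{2}(F)\V(N)=\V(F)$'' is correct, but the element-wise rewriting you propose is exactly where the argument threatens to fail, and you do not carry it out. Expanding $w(u_1n_1,\dots,u_kn_k)$ yields, besides a main term in $\V^{2}(F)$ and a term in $\V(N)$, cross terms lying in $\V(F)\cap N$ (roughly in $[\V(F),N]$), and these are not visibly contained in $\V^{2}(F)\V(N)$; for a general variety the value of $w$ on elements of $N$ mixed with elements of $\V(F)$ cannot simply be declared an ``error term in $\V(N)$''. What the rewriting honestly gives is $\V(F)=\V^{2}(F)\,(\V(F)\cap N)$, which only shows $\V(F)/\V^{2}(F)\V(N)\in\V$ --- and that is not enough, since the goal is that this quotient is \emph{trivial}. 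The paper's argument is global and shorter: from $\V(F)N=F$ one first deduces $\V^{2}(F)N=F$, because $F/\V^{2}(F)N$ lies in the variety $\V^{2}$ and is a quotient of $F/N$, while every non-trivial group $G$ with $\V(\V(G))=1$ has a non-trivial $\V$-quotient (either $G/\V(G)\neq 1$, or $G=\V(G)$ forces $G=1$); then $F/\V^{2}(F)\V(N)$ is a homomorphic image of $N/\V(N)\in\V$, which forces $\V(F)\le\V^{2}(F)\V(N)$. You should either adopt this quotient-level argument or supply the full verbal calculus, including the treatment of $\V(F)\cap N$; as it stands this direction is a sketch with its central difficulty acknowledged but unresolved.
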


\begin{proof}
Preliminary observations:
if a group $G$ is in $\V^2$ and non-trivial, 
then $G/\V(G)$ is in $\V$ and non-trivial.
Indeed, $\V( G / \V(G)) = \{1\}$,
and $G/\V(G) = \{1\}$ is impossible
(otherwise $G = \V (G) = \V^2(G) = \{1\}$).

\vskip.2cm

We show first the easy implication:
assuming that there exists a non-trivial quotient
$\pi : \V(F)/\V(N) \twoheadrightarrow Q$, with $Q$ in $\V$,
we have to show that $F/N$ has a non-trivial quotient in $\V$.
For this, the group $F$ can be arbitrary 
(it need not be free and non-cyclic).
\par

We claim that the group $F/\V^2(F)\V(N)$ is not in $\V$.
Indeed, since $\{1\} = \V(Q) \lneqq Q \ne \{1\}$, we have
$$
\V^2(F)\V(N)/\V(N) \, = \,  \V(\V(F)/\V(N)) \, \le \,  \pi^{-1}(\V(Q))
\, \lneqq \, \pi^{-1}(Q) \, = \, \V(F)/\V(N) ,
$$
so that $\V^2(F)\V(N)$ is properly contained in $\V(F)$;
this implies that 
$$
\V(F / \V^2(F)\V(N)) \, = \, \V(F) / \V^2(F) \V(N) \ne \{1\} ,
$$ 
and the claim is proved.
\par

Now we use the claim as follows. 
In the group $F / \V^2(F)\V(N)$, the normal subgroup $\V^2(F)N / \V^2(F)\V(N)$ 
belongs to $\V$, being a homomorphic image of $N / \V(N)$.
It follows from the claim that this normal subgroup is proper, i.e.\  $\V^2(F) N \ne F$.
The nontrivial quotient  $G = F / \V^2(F)N$ belongs to the variety $\V^2$ 
since it is a homomorphic image of $F / \V^2(F)$. 
Therefore, by the preliminary observation, $G$ has a nontrivial homomorphic image in $\V$. 
So has the group $F/N$, as required, since in turn, $G$
is a homomorphic image of $F/N$.

\vskip.2cm

We show now the converse implication, for which we will use
a non-trivial result on non-cyclic free groups.
Assume that $F/N$ has a non-trivial quotient $G\in \V$; 
that is we have a normal subgroup $M\geq N$ with $F/M = G$. 
Then $H := \V(F)/\V(M)$ is in $\V$, because $\V(F) \leq M$. 
The group $H$ is non-trivial by Theorem 43.41 in \cite{N67},
since $M\neq F$ and the variety $\V$ is proper. 
As $\V(N)\le \V(M)$, it follows that $H$ is a quotient of $\V(F)/\V(N)$, 
and this ends the proof.
\end{proof}

\begin{thm}
% 4.3
\label{4.3}
Let $F$ be a non-abelian free group of at most countable rank 
and $N \vartriangleleft F$ a normal subgroup.
\begin{enumerate}[label=(\arabic*)]
\item\label{1FROM4.3}
If $F/N$ is amenable, 
then $F'/N'$ is countable, torsion-free, amenable, of the same type of amenability as $F/N$.
\item\label{2FROM4.3}
 If $F/N$ is a non trivial amenable WM group, 
 then $F'/N'$ is a countable torsion-free, amenable, WM group.
\end{enumerate}
\end{thm}

\begin{proof}
\ref{1FROM4.3} 
Assume that $F/N$ is amenable.
Since $N/N'$ and $F/N$ are amenable, so is the group $F/N'$ of the extension
\begin{equation}
% equation 1
\label{star}
1 \longrightarrow N/ N' \longrightarrow F/N' \longrightarrow F/N \longrightarrow 1,
\end{equation}
and the subgroup $N/N'$ of $F/N'$.
It follows from Lemma \ref{lem:torsion} that the group $F'/N'$ is torsion-free. 
If $F/N$ is elementary amenable, then $F/N'$ and hence $F'/N'$ are elementary amenable.
If $F/N$ belongs to the class $SG \smallsetminus EG$, 
then $F/N'$ also belongs to this class and hence $F'/N'$ 
belongs to $SG \smallsetminus EG$ as $F/N'/F'/N'=F/F'$ is abelian.
Finally, if $F/N$ belongs to the class $AG \smallsetminus SG$
then the same argument shows that $F'/N' \in AG \smallsetminus SG$. 
This proves \ref{1FROM4.3}.

\vskip.2cm

\ref{2FROM4.3} 
We assume that $F/N$ is an amenable WM group.
%By Lemma \ref{lem:torsion}, $F'/N'$ is torsion-free.
By \ref{1FROM4.3} and by Proposition \ref{prop:char}\ref{3FROMprop:char},
it suffices to prove that $F'/N'$ does not admit any non-trivial finite or abelian quotient. 
Note that abelian groups form a proper variety; 
by Theorem \ref{thm:var}, indeed by its easy part,
$F'/N'$ cannot have a non-trivial abelian quotient,
otherwise $F/N$ would have a non-trivial abelian quotient, 
in contradiction with Proposition \ref{prop:char}\ref{1FROMprop:char}.
\par

Suppose $F'/N'$ had a non-trivial finite quotient. 
Then $F'/N'$ would have a finite simple non-abelian quotient $H$.
The subgroup $F' \cap N/N'$ of $N/N'$ being normal and abelian, 
$H$ would in fact be a quotient of $F'/(F'\cap N)$, 
which is isomorphic to $F'N/N$, the commutator subgroup of $F/N$. 
Since $F/N$ is WM, we would have $F/N=(F/N)'$, 
and $H$ would be a factor of $F/N$, 
in contradiction with Proposition \ref{prop:char}\ref{1FROMprop:char}.
\par

To prove the second statement, it remains to show that $F'/N'$ is non-trivial. 
Suppose instead that $F' = N'$. Then $N$ contains $F'$ so $F/N$ is abelian. 
Since it is also a nontrivial WM group, we obtain a contradiction 
with Proposition \ref{prop:char}\ref{1FROMprop:char}.
\end{proof}

\begin{remark}
% 4.4 R3
Suppose that $F$ is a non-abelian free group, and $N$ a normal subgroup in $F$.
If $F/N$ has a non-trivial finite quotient, then $F'/N'$ also has a non-trivial finite quotient. 
Indeed we then have that there is a normal subgroup $N\leq R< F$ such that $F/R$ is finite. 
It follows that $F/R'$ is virtually a free abelian group, 
and hence is residually finite. 
Thus $F'/R'\leq F/R'$ is also residually finite. 
Moreover $F'/R'$ is non-trivial by the Auslander-Lyndon result (\cite[Corollary 1.2]{AL55}), 
since $F\neq R$. 
It remains to observe that $F'/R'$ is a homomorphic image of $F'/N'$ since $N'\leq R'$.
\end{remark}

The same conclusion is true if, in the above statement, 
one replaces the variety of abelian groups by any proper variety $\V$ 
(i.e.\ if one replaces the commutator subgroup $N'$ by $\V(N)$);
just use P.\ Neumann's theorem 43.41 from \cite{N67} instead of Auslander-Lyndon's theorem). 
Also, the class of finite groups can be replaced by any star class,
as defined by K. Gruenberg in \cite{Gru57}, 
if this class is closed under homomorphic images.
Gruenberg's star property of an abstract class $\P$ of groups is defined as follows:

A class $\P$ of groups has the \emph{star property} if
\begin{enumerate}[noitemsep,label=(\arabic*)]
\item
$\P$ is closed under taking subgroups and direct products of two groups from $\P$;
\item
if $A$ is a normal subgroup of $B$, if $A$ is a residually $\P$-group and $B/A \in \P$, 
then $B$ is a residually $\P$-group.
\end{enumerate}
Examples of star classes include classes of finite groups, 
of finite $p$-groups, and of solvable groups.
Some results about star classes and residual properties of groups of the form $F/\V(N)$ 
have been obtained by Baumslag, Dunwoody and Andreev-Ol'shanskii 
\cite{Ba63, Du65, AO68}.

By Theorem \ref{4.3}, in order to construct a countable torsion-free, amenable, WM group, 
it suffices to construct a countable amenable, WM group $G$: 
simply present $G$ as $G=F/N$, then $F'/N'$ answers Bergelson's question. 
Here are some examples.

\begin{example}
% 4.5 E1
\label{exAlt}
Let $\Alt_{\textrm{fin}}(\N)$ be the group of 
all finitely-supported even permutations of the natural numbers. 
This group is locally finite and therefore amenable. 
Because it is also simple, Corollary \ref{cor:locally-finite} implies that it is a WM group. 
So if $\Alt_{\textrm{fin}}(\N)=F/N$ then, by Theorem \ref{4.3}, 
$F'/N'$ is a countable torsion-free, amenable, WM group.
\end{example}

\begin{example}
% 4.6 E2
\label{exfull}
Let $T$ be a minimal homeomorphism of the Cantor set $C$, 
i.e.\ a homeomorphism such that the orbit $\{T^ix\,|\, i \in \Z \}$ is dense in $C$ for every $x\in C$. 
Define its full topological group $[[T]]$ as the group of those homeomorphisms $g$ of $C$
such that there exists a closed and open partition $C=\coprod_{s=1}^n C_s$ 
with the property that the restriction of $g$ to any $C_s$ 
coincides with some power $T^{k_s(g)}$ of $T$, 
where $k_s(g)$ is some integer (see \cite{Ma06} or \cite{JM13}). 
Let $[[T]]'$ be the commutator subgroup. 
By \cite{Ma06}, $[[T]]'$ is a countably infinite simple amenable group,
which is finitely generated in case $(T,C)$ is a subshift over a finite alphabet (see \cite{LM95}). 
So, if $F/N = [[T]]'$, then Theorem \ref{4.3} implies that $F'/N'$ 
is a countable torsion-free, amenable, WM group.
\par
The group $F/N = [[T]]'$ is infinite simple amenable, and therefore not elementary amenable
by \cite[Corollary 2.2]{Ch80}. 
Since $(F/N')/(F'/N') \simeq F/F'$ is abelian,
it follows that $F'/N'$ is not elementary amenable.
\end{example}

\begin{prop}
% 4.7
\label{prop:elem}
The group $F'/N'$ from Example \ref{exAlt} is elementary amenable group, 
while the group $F'/N'$ from Example \ref{exfull}
is amenable but not elementary amenable.
\end{prop}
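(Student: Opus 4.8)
The plan is to read off both statements from Theorem \ref{33}(1), which guarantees that $F'/N'$ has the \emph{same type of amenability} ($\mathcal{T}_1=EG$, $\mathcal{T}_2=SG\setminus EG$, or $\mathcal{T}_3=AG\setminus SG$) as the quotient $F/N$ from which it is built. Thus it suffices to place $\Alt_{\textrm{fin}}(\N)$ and $[[T]]'$ in this trichotomy.

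For the first example this is immediate: $\Alt_{\textrm{fin}}(\N)$ is locally finite — a finite set of finitely supported permutations moves only finitely many points and so generates a finite subgroup — hence, being a directed union of finite groups, it lies in $EG$, and Theorem \ref{33}(1) then places the corresponding group $F'/N'$ in $EG$, i.e. it is elementary amenable. For the second example I would take $T$ to be a minimal subshift over a finite alphabet (the case singled out in Example 2), so that $G:=[[T]]'$ is finitely generated, infinite and simple by \cite{M06}, and amenable as recalled there. The crux is then the claim that \emph{a finitely generated infinite simple group is not elementary amenable}. Granting it, $[[T]]'$ has amenability type $\mathcal{T}_2$ or $\mathcal{T}_3$, hence so does $F'/N'$ by Theorem \ref{33}(1), and in particular $F'/N'$ is amenable but not elementary amenable.

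It remains to establish the crux, which I regard as the only non-formal step. I would run a transfinite induction along Chou's hierarchy $EG=\bigcup_\alpha EG_\alpha$ of \cite{Ch80}, in which $EG_0$ and $EG_1$ consist of trivial, finite and abelian groups and each successor stage $EG_{\alpha+1}$ is obtained from $EG_\alpha$ by forming one group extension or one directed union. If a finitely generated infinite simple $G$ lay in $EG$, let $\alpha$ be least with $G\in EG_\alpha$: then $\alpha$ is not $0$ or $1$ (since $G$ is infinite and non-abelian) and not a limit ordinal (by minimality), so $\alpha=\beta+1$; if $G$ is an extension of two $EG_\beta$-groups then simplicity forces one factor to be trivial and $G\in EG_\beta$, while if $G$ is a directed union of $EG_\beta$-subgroups then finite generation forces $G$ to coincide with one of them and again $G\in EG_\beta$ — either way contradicting the choice of $\alpha$. (Alternatively one may cite this fact directly; it is precisely what makes the topological full groups $[[T]]'$ of minimal subshifts the first known finitely generated simple amenable groups, which therefore lie outside $EG$.) The one thing to watch is that Example 2 must be invoked in its finitely generated form: for a general minimal homeomorphism — e.g. an odometer — the group $[[T]]'$ is locally finite, hence elementary amenable, so the subshift hypothesis is genuinely needed for the second assertion.
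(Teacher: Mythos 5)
Your proposal is correct and follows the same basic strategy as the paper: reduce the question to the amenability type of $F/N$ and then identify where $\Alt_{\textrm{fin}}(\N)$ and $[[T]]'$ sit. The paper does the reduction by hand -- for the first example it observes via the sequence \eqref{star} that $F'/N'$ is an extension of the abelian group $(F'\cap N)/N'$ by $F'N/N\le F/N$, so $EG$ is inherited; for the second it uses simplicity of $G$ to get $F'N/N=F/N$, hence $F'/N'$ surjects onto $G$, and quotients of $EG$-groups are in $EG$ -- whereas you simply invoke Theorem \ref{33}(1), which indeed already packages exactly this ``same type of amenability'' statement and whose proof is not circular here. The substantive difference is at the step the paper leaves unproved: the paper merely asserts that $G=[[T]]'$ is not elementary amenable, while you supply a complete argument (the standard transfinite induction along Chou's hierarchy showing that a finitely generated infinite simple group cannot lie in $EG$). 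You also correctly flag that this step genuinely requires $[[T]]'$ to be finitely generated, i.e.\ that $T$ should be taken to be a minimal subshift as in the parenthetical of Example 2; for a general minimal homeomorphism (e.g.\ an odometer) $[[T]]'$ need not be finitely generated and the non-elementary-amenability claim would not follow by this route. This caveat is a point the paper's own proof glosses over, so your version is, if anything, more careful.
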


\begin{proof} 
Consider a free group $F$, a normal subgroup $N$, and the quotient $G = F/N$.
\par

If $G$ is elementary amenable, the extension (\ref{star}) of the proof of Theorem \ref{4.3}
shows that $F/N'$ is also elementary amenable;
hence so is its subgroup $F'/N'$.
This occurs for Example \ref{exAlt}, since $\Alt_{\textrm{fin}}(\N)$
is locally finite, and therefore elementary amenable.
\par

If $G = F/N$ is amenable, so is $F/N'$, again by the extension (\ref{star}) of Theorem \ref{4.3}.
If $G = F/N$ is simple non-abelian, it is in particular perfect, so that $F'N/N = F/N$,
and $F'/N'$ factors onto $G$.
If moreover $G$ is not elementary amenable, 
$F'/N'$ has the same property. 
This is the case of $G = [[T]]'$ in Example \ref{exfull}.
\end{proof}

The next statement gives important information about the subgroups of $F/N'$.

\begin{prop}
% 4.8
\label{nf}
Let $H$ be a non-free subgroup of $F/N'$. 
Then the intersection $H\cap N/N'$ is nontrivial 
and therefore $H$ has a nontrivial normal free abelian subgroup.
\end{prop}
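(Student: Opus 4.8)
The plan is to control a subgroup $H\le F/[N,N]$ by means of its image $\bar H$ in $F/N$. Put $A:=H\cap N/[N,N]$. Since $N\triangleleft F$, the image $N/[N,N]$ is normal in $F/[N,N]$, so $A\triangleleft H$; and $A$ is free abelian, being a subgroup of the free abelian group $N/[N,N]$. Because $A=\ker(H\twoheadrightarrow\bar H)$, the ``therefore'' clause of the proposition will follow as soon as $A\ne 1$ is established, and the whole statement reduces to the implication: \emph{if $A=1$, then $H$ is free}. So I would assume $A=1$ and show $H$ is free.

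First I would lift $H$ to $F$. Let $L\le F$ be the full preimage of $H$ under $F\to F/[N,N]$. Then $L$ is free by Nielsen--Schreier, $[N,N]\le L$, $L\cap N=[N,N]$ (as $(L\cap N)/[N,N]=A=1$), and $H=L/[N,N]$. Now put $E:=LN\le F$; this is a free group containing $N$ as a normal subgroup, and $Q:=E/N\cong L/(L\cap N)=H$. Inside $E/[N,N]=LN/[N,N]$ the subgroup $H=L/[N,N]$ meets $N/[N,N]$ trivially and satisfies $H\cdot(N/[N,N])=E/[N,N]$, so $H$ is a complement to $N/[N,N]$ and the extension $1\to N/[N,N]\to E/[N,N]\to Q\to 1$ splits. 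Hence the proof reduces to the claim $(\star)$: \emph{if $E$ is a free group, $M\triangleleft E$, and the extension $1\to M/[M,M]\to E/[M,M]\to E/M\to 1$ splits, then $E/M$ is free}. Applying $(\star)$ to $(E,M)=(LN,N)$ gives that $Q\cong H$ is free, which contradicts $H$ being non-free; so $A\ne 1$.

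It remains to prove $(\star)$. If $E/M$ has an element of finite order, this is immediate: by Lemma \ref{lem:torsion} applied to the free group $E$ and $M\triangleleft E$, the group $E/[M,M]$ is torsion-free, so it contains no subgroup isomorphic to $E/M$ and the extension cannot split. In general, I would use relation modules. Fix a free basis $X$ of $E$, set $Q=E/M$, and let $I\subset\Z Q$ be the augmentation ideal. Since $E$ is free, the Fox-derivative (relation) sequence $0\to M/[M,M]\to(\Z Q)^{(X)}\to I\to 0$ is an exact sequence of $\Z Q$-modules with $(\Z Q)^{(X)}$ free. The key point is that a splitting of the group extension $1\to M/[M,M]\to E/[M,M]\to Q\to 1$ forces this module sequence to split: under the isomorphisms $H^{2}(Q;M/[M,M])\cong\operatorname{Ext}^{2}_{\Z Q}(\Z,M/[M,M])\cong\operatorname{Ext}^{1}_{\Z Q}(I,M/[M,M])$, the last one being the connecting isomorphism of $0\to I\to\Z Q\to\Z\to 0$ (valid because $\Z Q$ is free), the extension class of $1\to M/[M,M]\to E/[M,M]\to Q\to 1$ corresponds to the class of the relation sequence. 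Once the relation sequence splits, $I$ is a direct summand of a free $\Z Q$-module, hence projective; then $\Z$ has projective dimension at most $1$ over $\Z Q$, so $Q$ has cohomological dimension at most $1$ and is therefore free by the Stallings--Swan theorem. This proves $(\star)$.

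The main obstacle is $(\star)$, and within it the homological input: the exactness of the relation sequence (exactly where the freeness of $E$ is used), the identification of the group-extension class with the class of that sequence, and the appeal to the Stallings--Swan theorem. An alternative is to package this content as the known equivalence ``for a free group $F$ and $N\triangleleft F$, the group $F/[N,N]$ splits over $N/[N,N]$ if and only if $F/N$ is free'' and quote it; granting that, the reductions above and the deduction of the normal free abelian subgroup are routine.
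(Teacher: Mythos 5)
Your proof is correct, and at the strategic level it coincides with the paper's: argue by contrapositive, realize $H$ as $P/N$ for a free group $P$ with $N\triangleleft P$ (your $E=LN$ is exactly the paper's $P$, the preimage of $H\cdot(N/[N,N])$), note that $1\to N/[N,N]\to P/[N,N]\to H\to 1$ splits with complement $H$, deduce $\operatorname{cd}(H)\le 1$, and invoke Stallings--Swan. The one genuine divergence is how the splitting of that single extension is converted into the vanishing of $H^2$. The paper does it by a direct lifting argument: for an arbitrary extension $1\to A\to G\xrightarrow{\pi} H\to 1$ with $A$ abelian, freeness of $P$ gives a lift $\varphi\colon P\to G$ of $\gamma\colon P\twoheadrightarrow H$; since $\Ker\gamma=N$ one gets $\varphi(N)\le A$, hence $\varphi([N,N])=1$, so $\varphi$ descends to $P/[N,N]\to G$, and composing with the splitting $\mu$ of the top row splits the bottom row --- thus $H^2(H,A)=0$ for every $H$-module $A$ with no machinery beyond freeness of $P$. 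You instead route through the relation module: exactness of $0\to N/[N,N]\to(\Z Q)^{(X)}\to I\to 0$ and the classical (MacLane--Gruenberg) identification of the class of the group extension with the class of this module sequence under $H^2(Q;N/[N,N])\cong\operatorname{Ext}^1_{\Z Q}(I,N/[N,N])$, concluding that $I$ is projective. Both are valid; the paper's version is more self-contained (it needs neither Fox calculus nor the identification of extension classes, only that the one given extension splits), while yours quotes two nontrivial classical facts but in exchange isolates the cleaner reusable statement that, for $F$ free and $N\triangleleft F$, the group $F/[N,N]$ splits over $N/[N,N]$ if and only if $F/N$ is free. Your preliminary reductions (normality and free-abelianness of $H\cap N/[N,N]$, the passage to $L$ and $E$, and the remark via Lemma \ref{lem:torsion}) are all fine and match the paper's.
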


\begin{proof}
Let $M=N/N'$ and assume that $H\cap M$ is trivial. 
Then $HM$ is a semidirect product, and we have an exact sequence
\begin{equation}
% Eq 2
\label{exact_1}
1 \longrightarrow M \longrightarrow HM \longrightarrow H \longrightarrow 1 .
\end{equation}
Since $HM$ is a subgroup of $F/N'$ of the form $P/N'$ for some $P$, $N\leq P<F$, 
we have an exact sequence
\begin{equation}
% Eq 3
1 \longrightarrow M \longrightarrow P/N' \overset{\beta}{\longrightarrow} H
\longrightarrow 1
\end{equation}
with $P$ free and $N\triangleleft P$. 
Let $\gamma : \xymatrix@1{P\ar@{->>}[r] &H}$ be defined as $\gamma=\beta\alpha$, 
where $\alpha:\xymatrix@1{P\ar[r] & P/N'}$ is the canonical projection. Observe that
\[
H \cong (P/N')/(N/N') \cong P/N
\]
and therefore $\operatorname{Ker} \gamma = N$.
\par

We are going to show that, for any $H$-module $A$,
the second cohomology group $H^2(H,A)$ vanishes. 
This will imply that $H$ has cohomological dimension $1$ 
and hence, by Stallings-Swan famous result \cite{S68,S69}, 
that $H$ is a free group, in contradiction witht the hypothesis.
\par

So assume that for some groups $A$ and $G$ with $A$ abelian, 
we have a short exact sequence
 \begin{equation}
 % Eq 4
 \label{exact_2}
1 \longrightarrow A \overset{i}{\longrightarrow} G
\overset{\pi}{\longrightarrow} H \longrightarrow 1 .
\end{equation}
Then there is a homomorphism $\varphi:\xymatrix@1{P\ar[r] & G}$ making the diagram
\[
\xymatrix{
& & P  \ar[d]_{\varphi} \ar[dr]^{\gamma} & &\\
1\ar[r] & A\ar[r]^{i} & G\ar[r]^{\pi} & H\ar[r] & 1
}
\]
commutative.
Indeed, let $B=\{b_1,b_2,\dots\}$ be a basis of $P$.
Then $\{\gamma(b_j)\}$ generate $H$. 
For each $j$ fix a preimage $g_j\in \pi^{-1}(\gamma(b_j))$ and define $\varphi(b_j)=g_j$.
This defines $\varphi$. Since $\pi\varphi(N)=\gamma(N)=1,$ we have $\varphi(N)\subset i(A)$ and so $\varphi(N')=1$ because $A$ is abelian.
\par

 Therefore the homomorphism $\varphi$ factorizes through $\psi:\xymatrix@1{P/N'\ar[r] & G}$ and there is a homomorphism $\xi:\xymatrix@1{M\ar[r] & A}$
making the diagram
\[
\xymatrix{
1 \ar[r] & M \ar[d]_{\xi} \ar[r] & P/N' \ar[d]_{\psi} \ar[r] & H \ar@{=}[d] \ar[r] & 1
\\
1\ar[r] & A \ar[r] & G \ar[r] & H \ar[r] & 1} 
\]
 commutative.
Now if $\mu: H\to P/N'$ is a splitting homomorphism for the top row, 
i.e.\ $\beta\mu=id$, then $\psi\mu$ splits the bottom row, as required.
\end{proof}

\section{Concluding remarks}
% section 5
\label{sectionConclRem}
We conclude by including an observation not related to WM groups, 
but related to the use of groups of type $F'/N'$, 
that have appeared in Section \ref{sectionconstF/N}.
\par

Crystallographic group are discrete groups of isometries of 
$n$-dimensional Euclidean spaces which have bounded fundamental domains. 
By a theorem of Bieberbach, they can equivalently be defined strictly in terms of group theory,
and this is the definitions that suits our needs here:

\begin{defn}
% 5.1 def1
\label{cr_def}
A \emph{crystallographic group} is a group $G$ containing a normal subgroup of finite index $N$
which is free abelian of finite rank and is such that the centralizer $C_G(N)$ coincides with $N$.
\end{defn}

Recall that $C_G(N)$ is defined as the group of those $g\in G$ 
which commute with every element of $N$. 
If $N$ is abelian, then clearly $N \leq C_G(N)$;
hence it is the reverse inclusion that matters in the definition above.

\begin{prop} 
% 5.2
\label{cr} 
Let $F$ be a finitely generated free group and $N$ a normal subgroup of finite index.
\par

Then every subgroup of $F/N'$ (for example $F'/N'$) is crystallographic.
\end{prop}

This proposition immediately follows from the following

 \begin{lem}
 % 5.3
Let $G$ be a finitely generated torsion-free group, which is virtually abelian. 
Then $G$ is crystallographic.
 \end{lem}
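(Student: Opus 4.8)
The plan is to produce inside $G$ a subgroup $N$ that is simultaneously normal, free abelian of finite rank, of finite index, and self-centralizing, which is exactly what Definition \ref{cr_def} demands. The construction proceeds by two reductions starting from an arbitrary finite-index abelian subgroup.

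First I would extract a free abelian subgroup of finite rank. By hypothesis $G$ has an abelian subgroup $A$ of finite index; since $G$ is finitely generated, $A$ is finitely generated, and since $G$ is torsion-free, $A$ is a finitely generated torsion-free abelian group, hence $A\cong\Z^k$ for some $k$. Replacing $A$ by its normal core $N_0=\bigcap_{g\in G}gAg^{-1}$ (a finite intersection of conjugates, as $[G:A]<\infty$) yields a normal subgroup $N_0\triangleleft G$ of finite index which, being a subgroup of $A$, is again free abelian of finite rank.

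Next I would pass to $N:=C_G(N_0)$. It is normal in $G$, being the centralizer of a normal subgroup, and it has finite index since it contains $N_0$. By construction $N_0$ is central in $N$ and of finite index there, so by Schur's theorem the commutator subgroup $N'$ is finite; but $N\le G$ is torsion-free, so $N'=1$ and $N$ is abelian. As a finite-index subgroup of the finitely generated torsion-free group $G$, the group $N$ is itself finitely generated and torsion-free, hence $N\cong\Z^m$ is free abelian of finite rank. Finally, $C_G(N)=N$: the inclusion $N\le C_G(N)$ holds because $N$ is abelian, and conversely any $g\in C_G(N)$ centralizes $N\supseteq N_0$, hence lies in $C_G(N_0)=N$. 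This exhibits $G$ as crystallographic.

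The only non-formal ingredient is Schur's theorem (a center-by-finite group has finite commutator subgroup); the rest is bookkeeping. I expect the one point that needs care is checking that each reduction retains all four required properties at once — in particular that replacing $N_0$ by its centralizer does not destroy "free abelian of finite rank," which is precisely where torsion-freeness of $G$ is used.
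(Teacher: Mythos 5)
Your proof is correct and uses the same key mechanism as the paper's: apply Schur's theorem to the centralizer of a finite-index normal free abelian subgroup, then use torsion-freeness to kill the finite commutator subgroup and conclude that the centralizer is abelian. The only (cosmetic) difference is that the paper starts from a \emph{maximal} normal abelian subgroup of finite index and derives a contradiction from $C_G(H)\neq H$, whereas you build the self-centralizing subgroup directly as $C_G(N_0)$ for $N_0$ the normal core of a finite-index abelian subgroup.
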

 
\begin{proof}
It follows from our assumptions that 
there exists a maximal normal abelian subgroup $H$ having finite index in $G$. 
Since $G$ is finitely generated and torsion-free, $H$ is free abelian of finite rank.
Suppose $C_G(H)\neq H$. 
The center of $C_G(H)$ has finite index in $C_G(H)$ since it contains $H$. 
Therefore, by a well known theorem of Schur,
$C_G(H)'$ is finite, and indeed trivial since $G$ is torsion-free. 
Thus, $C_G(H)$ is abelian contrary to the choice of $H$.
 \end{proof}

\end{document}